\documentclass[9pt, english, a4paper]{article}
\usepackage[latin1]{inputenc}
\usepackage[T1]{fontenc}
\usepackage{babel, graphicx, textcomp, varioref, amsfonts}
\usepackage{amsthm}
\usepackage{amsmath}

\newtheorem{thm}{Theorem}[section]

\newtheorem{lemma}[thm]{Lemma}
\newtheorem{prop}[thm]{Proposition}
\newtheorem{cor}[thm]{Corollary}

\title{Quasi-linear Stochastic Partial Differential Equations with irregular Coefficients - Malliavin regularity of the Solutions}

\author{ Torstein Nilssen \thanks{Department of Mathematics, University of Oslo, Moltke Moes vei 35, P.O. Box 1053 Blindern, 0316 Oslo, Norway.
E-mail: torsteka@math.uio.no. 
Funded by  Norwegian Research Council (Project 230448/F20).} }

\begin{document}

\maketitle

\begin{abstract}

We study quasi-linear stochastic partial differential equations with discontinuous drift coefficients. Existence and uniqueness of a solution is already known under weaker conditions on the drift, but we are interested in the regularity of the solution in terms of Malliavin calculus. We prove that when the drift is bounded and measurable the solution is directional Malliavin differentiable.

\end{abstract}

\section{Introduction}

We consider the quasi-linear stochastic partial differential equation
\begin{equation} \label{SPDE}
\frac{\partial}{\partial t} u(t,x) = \frac{\partial^2}{\partial x^2} u(t,x) + b(u(t,x)) + \frac{\partial^2}{ \partial t \partial x} W(t,x), \hspace{0.5cm} (t,x) \in (0,T] \times (0,1)
\end{equation}
with the initial condition $u(0,x) = u_0(x)$, $u_0 \in C_0((0,1))$. We will consider Neumann boundary conditions, 
$$
\frac{\partial}{\partial x} u(t,0) = \frac{\partial}{\partial x} u(t,1) = 0 .
$$

In (\ref{SPDE}) $\frac{\partial^2}{ \partial t \partial x} W(t,x)$ denotes space-time White noise and we assume $b : \mathbb{R} \rightarrow \mathbb{R}$ is bounded and measurable, i.e. we allow for for discontinuities.

Existence and uniqueness of a strong solution to (\ref{SPDE}) is already known under weaker conditions on the drift. More specifically, in \cite{GyongyPardoux} the authors prove existence and uniqueness of a strong solution to (\ref{SPDE}) when $b$ is allowed to be of linear growth. 

In this paper we are restricting ourselves to bounded drift, but we show that the solution has regularity properties. Indeed, the solution is Malliavin differentiable in every direction, $h \in L^2([0,T] \times [0,1])$, denoted $D^hu(t,x)$. Although we are not yet able to prove existence of the usual Malliavin derivative, i.e. 
$$
D_{\cdot} u(t,x) \in L^2(\Omega ; L^2([0,T] \times [0,1]))
$$
such that $\int_0^T \int_0^1 D_{\theta, \xi} u(t,x) h(\theta,\xi) d\xi d\theta = D^hu(t,x)$, this paper has some major contributions:

\begin{itemize}

\item 
This work shows that the solution behaves more regular than one could expect by considering the drift. The classical way of studying Malliavin calculus and S(P)DE's is to show that the solutions 'inherit' regularity from the coefficients. In the current paper we show that this technique does not reveal all properties of S(P)DE's.

\item
It is an example of an infinite-dimensional generalization of \cite{PMNPZ}. Here, the authors show that SDE's with bounded and measurable drift has a unique strong solution using a new technique which moreover show that the solution is Malliavin differentiable.

\item
Very recently, the authors of \cite{DaPratoFlandoliPriolaRockner} show that there is strong uniqueness (and thus strong existence) in the Hilbert-space valued SDE
$$
dX_t = ( AX_t + B(t,X_t))dt + dW_t \in H
$$
when $B: [0,T] \times H \rightarrow H$ is bounded and measurable. Thus proving a generalization of the famous result by Veretennikov \cite{Veretennikov} and Zvonkin \cite{Zvonkin} to SPDE's.

The current paper suggest that the technique in \cite{PMNPZ} could be used to show that the solutions obtained in \cite{DaPratoFlandoliPriolaRockner} are even Malliavin differentiable.

See also \cite{FlandoliNilssenProske} where the authors prove Malliavin differentiability in the case of H\"{o}lder-continuous drift.

\item
The Malliavin calculus is tailored to investigate regularity properties of densities of random variables. Perhaps the most well known explicit formula for this is the following: for a random variable $F \in \mathbb{D}^{1,2}$, $h \in H$ such that $\langle DF, h\rangle \neq 0$ and $\frac{h}{\langle DF,h\rangle} \in \textrm{ dom}\delta $ (the Skorohod-integral) the density of $F$ is given by
$$
p_F(x) = E\left[ 1_{(F > x)} \delta \left( \frac{h}{\langle DF,h\rangle} \right) \right].
$$
See \cite{Nualart} Proposition 2.1.1 and Exercise 2.1.3 for details and precise formulations. In the above we note that only the directional Malliavin derivative appears.

\end{itemize}

Let us briefly explain the idea of the proof: assume first that $b \in C^1$ and $u$ solves (\ref{SPDE}). The directional Malliavin derivative should then satisfy, for any direction $h \in L^2([0,T] \times (0,1))$,
$$
\frac{\partial}{\partial t} D^hu(t,x) = \frac{\partial^2}{\partial x^2} D^hu(t,x) + b'(u(t,x))D^hu(t,x) + h(t,x) .
$$ 
For a fixed sample path, we regard the above equation as a deterministic equation and we can use the Feynman-Kac formula to solve it as a functional of $\int_0^tb'(u(s,\cdot)))ds$. Since the solution of (\ref{SPDE}) is very irregular as a function of $t$, the local time $L(t, \cdot)$ is continuously differentiable in the spatial variable. Therefore we can write
\begin{align*}
\left| \int_0^tb'(u(s,x))ds \right| & =\left|  \int_{\mathbb{R}} b'(y) L(t,y) dy \right| \\
&  = \left| -\int_{\mathbb{R}} b(y) L'(t,y) dy \right|  \\
& \leq \|b\|_{\infty} \int_{\mathbb{R}} \left| L'(t,y)\right| dy \\
\end{align*}
where we have used integration by parts. We then obtain a priori bounds of $E[(D^hu(t,x))^2]$ which do not depend on the norm of $b'$, but rather on $\|b\|_{\infty}$. Finally we approximate a general $b$ by smooth functions and use comparison to generate strong convergence (in $L^2(\Omega)$) of the corresponding sequence of solutions to the solution of (\ref{SPDE}).

\bigskip

The paper is organized as follows: 
In Section \ref{MalliavinFramework} we introduce the Malliavin calculus and some results we need. In Section \ref{Framework} we state rigorously the equation (\ref{SPDE}). In Section \ref{LocalTime} we prove that the local time of the solution to (\ref{SPDE}) with $b=0$ has nice regularity properties. We then study (\ref{SPDE}) when the drift is smooth in Section \ref{DerivativeFreeSection} and use the results from Section \ref{LocalTime} to obtain derivative-free estimates.

The main result, Theorem \ref{MainResult}, and its proof is in Section \ref{MainSection}.

\section{Basic concepts of Malliavin Calculus} \label{MalliavinFramework}
Let $(\Omega, \mathcal{F}, P)$ be a complete probability space. We assume that $\mathcal{F}$ is the completion of $ \sigma\{ W(h) : h \in L^2([0,T] \times [0,1]) \}$ with the $P$-null sets. Here  \\ $W: L^2([0,T] \times [0,1]) \rightarrow L^2(\Omega)$ is a linear mapping such that $W(h)$ is a centered Gaussian random variable. 
The covariance is given by $E[W(h)W(g)] = \langle h, g \rangle$ where the right hand side denotes the inner product in $L^2([0,T] \times [0,1])$.

We have the orthogonal Wiener chaos decomposition
$$
L^2(\Omega) = \bigoplus_{n=0}^{\infty} H_n ,
$$
where $H_n := span\{ I_n(f) : f \in L^2(([0,T] \times [0,1])^n)\}$ and $I_n(f)$ is the n-fold Wiener-It\^{o} integral of $f$. For a random variable $F \in L^2(\Omega)$ with Wiener chaos decomposition $F = \sum_{n=0}^{\infty} I_n(f_n)$ we have
$$
E[F^2] = \sum_{n =0}^{\infty} n! \|f_n\|^2_{L^2(([0,T] \times [0,1])^n)} .
$$

We call a random variable $F$ smooth if it is of the form
$$
F = f(W(h_1), \dots , W(h_n))
$$
for $h_1, \dots h_n \in L^2([0,T] \times [0,1])$ and $f \in C^{\infty}_c (\mathbb{R}^n)$. For such a random variable we define the Malliavin derivative
$$
D_{\theta, \xi} F = \sum_{j=1}^n \frac{ \partial}{\partial x_j} f(W(h_1), \dots , W(h_n)) h_j(\theta, \xi)
$$
as an element of $L^2 (\Omega ; L^2([0,T] \times [0,1]))$. We denote by $\mathbb{D}^{1,2}$ the closure of the set of smooth random variables with respect to the norm
$$
\|F\|^2_{1,2} := E[F^2] + \int_0^T \int_0^1 E[( D_{\theta, \xi} F)^2] d\xi d\theta .
$$

Furthermore we define the directional Malliavin derivative in the direction $h \in L^2([0,T] \times [0,1])$ as
$$
D^hF = \langle DF,h \rangle = \int_0^T \int_0^1 D_{\theta, \xi} F h(\theta,\xi) d\xi d\theta
$$
and by $\mathbb{D}^{h,2}$ the closure of the set of smooth random variables with respect to the norm
$$
\|F\|^2_{h,2} := E[F^2] + E[(D^hF)^2].
$$

The integration by parts formula
$$
E[ D^h F] = E[FW(h)]
$$
is well known, and can be found in \cite{Nualart}.

We have the following characterization of $\mathbb{D}^{h,2}$ which is obtained by modifying the proof of Proposition 1.2.1 in \cite{Nualart}:

\begin{prop} \label{ChaosCrit}
For $F = \sum_{n =0}^{\infty} I_n(f_n) \in L^2(\Omega)$ we have that $F$ belongs to $\mathbb{D}^{h,2}$ if and only if 
$$
\sum_{n=1}^{\infty} nn! \| \int_0^1 f_n(\cdot, s,y) h(s,y)dy \|^2_{L^2(([0,T] \times [0,1])^{n-1})} < \infty ,
$$
in which case the above is equal to $E[(D^hF)^2]$.
\end{prop}

Let us prove the following technical result which is inspired by Lemma 1.2.3. in \cite{Nualart}:

\begin{lemma} \label{DerivativeCrit}
Suppose $\{ F_N \}_{N \geq 1}  \subset \mathbb{D}^{h,2}$ is such that
\begin{itemize}

\item
$F_N \rightarrow F$ in $L^2(\Omega)$

\item
$\sup_{N \geq 1} E[(D^hF_N)^2] < \infty$

\end{itemize}

Then $F \in \mathbb{D}^{h,2}$ and $D^hF_{N}$ converges to $D^hF$ in the weak topology of $L^2(\Omega)$.

\end{lemma}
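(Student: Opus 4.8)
The plan is to mimic the classical argument (Lemma 1.2.3 in Nualart) but phrased entirely through the chaos characterization from Proposition~\ref{ChaosCrit}, since $\mathbb{D}^{h,2}$ is not a priori reflexive in a way that gives weak compactness for free. First I would expand each $F_N$ in its Wiener chaos, $F_N = \sum_{n=0}^\infty I_n(f_n^N)$, and observe that the second hypothesis says
$$
\sup_{N \geq 1} \sum_{n=1}^\infty n\, n! \, \Big\| \int_0^1 f_n^N(\cdot, s, y)\, h(s,y)\, dy \Big\|^2_{L^2(([0,T]\times[0,1])^{n-1})} < \infty .
$$
This is exactly the statement that the sequence of directional derivatives $\{D^h F_N\}$ is bounded in $L^2(\Omega)$.

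Since $L^2(\Omega)$ is a Hilbert space, the bounded sequence $\{D^h F_N\}$ has a subsequence converging weakly to some limit $G \in L^2(\Omega)$. The core of the argument is then to identify $G$ with $D^h F$ and to upgrade this to weak convergence of the \emph{full} sequence. For the identification I would use the fact that $F_N \to F$ in $L^2(\Omega)$ forces convergence of the chaos kernels: $f_n^N \to f_n$ in $L^2(([0,T]\times[0,1])^n)$ for each fixed $n$, where $F = \sum_n I_n(f_n)$. Combined with the weak-limit structure, a lower-semicontinuity argument (Fatou applied term-by-term in $n$ to the nonnegative chaos norms) shows that
$$
\sum_{n=1}^\infty n\, n! \, \Big\| \int_0^1 f_n(\cdot, s, y)\, h(s,y)\, dy \Big\|^2 \leq \liminf_N E[(D^hF_N)^2] < \infty,
$$
which by Proposition~\ref{ChaosCrit} gives $F \in \mathbb{D}^{h,2}$.

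To pin down the weak limit, I would test against smooth random variables: for a fixed smooth $Z = z(W(g_1),\dots,W(g_k))$, each $D^h F_N$ pairs against $Z$ through finitely many chaoses, and the convergence $f_n^N \to f_n$ (for each $n$) together with the uniform bound lets me pass to the limit in $E[Z \cdot D^h F_N]$ and recognize the limit as $E[Z \cdot D^h F]$. Since smooth $Z$ are dense in $L^2(\Omega)$ and the sequence is norm-bounded, this identifies the weak limit along \emph{every} weakly convergent subsequence as the same element $D^h F$; hence the whole sequence $D^h F_N \rightharpoonup D^h F$ weakly in $L^2(\Omega)$.

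The main obstacle I expect is the bookkeeping that justifies interchanging the limit in $N$ with the infinite sum over chaoses $n$ when passing to the weak limit — the uniform $L^2(\Omega)$ bound controls the tail $\sum_{n \geq M}$ uniformly in $N$, and the per-chaos convergence handles each finite block, so the argument is an $\varepsilon/3$ splitting between the tail, a finite block, and the remainder. Care is also needed because the directional-derivative norm only controls the single contraction $\int_0^1 f_n^N(\cdot,s,y)h(s,y)\,dy$ rather than the full kernel, so all estimates must be written in terms of this contracted quantity rather than $\|f_n^N\|$ itself; once that is respected, the Fatou and density steps go through cleanly.
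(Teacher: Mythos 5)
Your proposal is correct and follows essentially the same route as the paper: chaos expansion, weak compactness of the bounded sequence $\{D^h F_N\}$ in $L^2(\Omega)$, identification of the limit chaos-by-chaos using the convergence of kernels forced by $F_N \to F$ in $L^2(\Omega)$, a Fatou/lower-semicontinuity step to verify the criterion of Proposition \ref{ChaosCrit}, and uniqueness of subsequential weak limits to upgrade to weak convergence of the full sequence. The only cosmetic difference is that you identify the weak limit by testing against dense smooth random variables, whereas the paper identifies it directly on the contracted chaos kernels $\langle f_{n,N_k}, h \rangle$; these amount to the same computation.
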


\begin{proof}
We write
$$
F= \sum_{n =0}^{\infty} I_n(f_n)
$$
and
$$
F_N= \sum_{n =0}^{\infty} I_n(f_{n,N} ) .
$$
Since $\{ D^hF_N \}_{N \geq 1}$ is bounded in $L^2(\Omega)$ we may extract a subsequence $D^hF_{N_k}$ converging in the weak topology to some element $\alpha = \sum_{n=0}^{\infty} I_n ( \alpha_n)$.
We note that
$$
D^hF_{N_k} = \sum_{n =1}^{\infty} nI_{n-1}( \langle f_{n,N_k}, h \rangle ) 
$$
and we see that $\langle f_{n,N_k}, h \rangle$ converges weakly in $L^2(([0,T] \times [0,1])^{n-1})$ to $\alpha_n$. It follows that $\alpha_n$ coincides with $\langle f_n, h \rangle$ and we have
\begin{align*}
\sum_{n=1}^{\infty} nn! \| \langle f_n, h \rangle \|^2_{L^2(([0,T] \times [0,1])^{n-1})} & \leq \sup_{k \geq 1} \sum_{n=1}^{\infty} nn! \| \langle f_{n,N_k}, h \rangle \|^2_{L^2(([0,T] \times [0,1])^{n-1})} \\
\end{align*}
which is finite by assumption. From Proposition \ref{ChaosCrit} we have $F \in \mathbb{D}^{h,2}$.

If we take any other weakly converging subsequence of $\{ D^hF_N \}_{N \geq 1}$ its limit must converge, by the preceding argument, to $D^hF$. This implies that the full sequence converges weakly.

\end{proof}

\section{Framework and solutions} \label{Framework}

With the notation from the previous section, we define $W(t,A) := W(1_{[0,t] \times A})$ which is the White noise on $[0,T] \times [0,1]$ and for $h \in L^2([0,T] \times [0,1])$ the Wiener-It\^{o}-integral w.r.t. $dW(t,x)$ is equal to 
$$
\int_0^T \int_0^1 h(t,x) dW(t,x) = W(h).
$$

Throughout this paper we will assume we have a filtration $\{ \mathcal{F}_t \}_{t \in [0,T]}$, where $\mathcal{F}_t$ is generated by $\{ W(s,x) : (s,x) \in [0,t] \times [0,1] \}$ augmented with the set of $P$-null sets.

We denote by $G(t,x,y)$ the fundamental solution to the heat equation, i.e.

$$
\frac{\partial}{\partial t} G(t,x,y) = \frac{\partial^2}{\partial x^2} G(t,x,y), \hspace{0.5cm} (t,x) \in (0,T] \times (0,1)
$$
with boundary conditions $\frac{\partial}{\partial x} G(t,0,y) = \frac{\partial}{\partial x} G(t,1,y) =0$ and $\lim_{t \rightarrow 0} G(t,x,y) = \delta_{x}(y)$ - the Dirac delta distribution in $x$.

It is well known that 
$$
G(t,x,y) = \frac{1}{\sqrt{2\pi t}} \sum_{n \in \mathbb{Z}} \left\{ \exp\{ -\frac{(y-x-2n)^2}{4t} \} + \exp\{ -\frac{(y+x-2n)^2}{4t} \} \right\},
$$

and there exist positive constants $c$ and $C$ such that uniformly in $t' < t$ and $y \in [0,1]$ we have
\begin{equation} \label{SemigroupHolderCont}
c \sqrt{t-t'} \leq \int_{t'}^t \int_0^1 G^2(t-s,x,y)dyds \leq C \sqrt{t-t'} .
\end{equation}

\bigskip

Assume we are given a bounded and measurable function $b: \mathbb{R} \rightarrow \mathbb{R}$.
By a solution to our main SPDE, (\ref{SPDE}), we shall mean an adapted and continuous random field $u(t,x)$ such that
\begin{align} \label{SPDESolution}
u(t,x) & = \int_0^1 G(t,x,y)u_0(y)dy \\
\notag & + \int_0^t \int_0^1 G(t-s,x,y) b(u(s,y)) dy ds + \int_0^t \int_0^1 G(t-s,x,y)dW(s,y)  .
\end{align}

\section{Local Time estimates} \label{LocalTime}

The local time of a process $(X_t)_{t \in [0,T]}$ is defined as follows: we define the occupation measure 
$$
\mu_t(A) = | \{ s \in [0,t] : X_s \in A \}|, \hspace{0.5cm} A \in \mathcal{B}(\mathbb{R})
$$
where $|\cdot|$ denotes the Lebesgue measure. The process $X$ has local time on $[0,t]$ if $\mu_t$ is absolutely continuous w.r.t. Lebesgue measure, and the local time, $L(t, \cdot)$, is defined as the corresponding Radon-Nykodim derivative. I.e.
$$
\mu_t(A) = \int_A L(t,y)dy .
$$
The local time satisfies the occupation time density formula
\begin{equation} \label{occupationFormula}
\int_0^t f(X_s)ds = \int_{\mathbb{R}} f(y) L(t,y) dy , \, \, P-a.s.
\end{equation}
for any bounded and measurable $f: \mathbb{R} \rightarrow \mathbb{R}$.

The aim of this section is to study local times of the driftless stochastic heat equation 
$$
\frac{\partial}{\partial t} u(t,x) = \frac{\partial^2}{\partial x^2} u(t,x) + \frac{ \partial^2}{ \partial t \partial x} W(t,x), \, \, (t,x) \in [0,T] \times (0,1) 
$$
with Neumann boundary conditions. We assume $u_0 = 0$ for simplicity. The solution is given by
$$
u(t,x) = \int_0^t \int_0^1 G(t-s,x,y) dW(y,s),
$$
where $G$ is the fundamental solution of the heat equation.

Fix $ x \in [0,1] $ and let $\omega \in C([0,T]; [-x,1-x])$. We are interested in the stochastic process
$$
X_t = u(t, x + \omega(t)) = \int_0^t \int_0^1 G(t-s,x + \omega(t),y) dW(y,s) .
$$
Notice that we are \emph{not} expanding the dynamics in $t$ of the composition of $u$ and $\omega$. Indeed, $x \mapsto u(t,x)$ is $P$-a.s. not differentiable so it is not clear how such a dynamic evolves. And even worse -  there is no It\^{o} formula for this process.

Nevertheless, $X_t$ is a Gaussian process and we have for $t> t'$
\begin{align*}
E[ (X_t - X_{t'})^2] & = \int_0^T \int_0^1 \left\{ G(t-s,x+\omega(t),y)1_{[0,t]}(s)  \right. \\
& \left. - G(t' -s,x + \omega(t'),y)1_{[0,t']}(s) \right\}^2 dy ds \\
& \geq \int_{t'}^t \int_0^1 G^2(t-s,x + \omega(t),y) dyds \geq c \sqrt{t-t'} \\
\end{align*}

from (\ref{SemigroupHolderCont}).

From \cite{GemanHorowitz}, Theorem 28.1 we have
\begin{thm}
Suppose $X_t$ is a Gaussian process such that 
$$
\int_0^T \int_0^T \left( E[(X_t - X_{t'})^2] \right)^{-p - 1/2} dt dt' < \infty .
$$
Then, there exists a local time $L^X(t,\cdot)$ of $X$ which moreover is $\lfloor p \rfloor$ times differentiable.

\end{thm}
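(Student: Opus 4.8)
The plan is to use the Fourier-analytic approach to occupation densities: the local time, if it exists, is the density of the occupation measure $\mu_t$, and its spatial Fourier transform coincides with the characteristic function of $\mu_t$. Applying the occupation formula (\ref{occupationFormula}) with $f(\cdot) = e^{i\xi\,\cdot}$ (to the real and imaginary parts) gives
$$
\widehat{\mu}_t(\xi) = \int_{\mathbb{R}} e^{i\xi y}\, \mu_t(dy) = \int_0^t e^{i\xi X_s}\, ds,
$$
so that, by Fourier inversion, one expects
$$
L^X(t,y) = \frac{1}{2\pi}\int_{\mathbb{R}} e^{-i\xi y}\, \widehat{\mu}_t(\xi)\, d\xi, \qquad \frac{\partial^k}{\partial y^k} L^X(t,y) = \frac{1}{2\pi}\int_{\mathbb{R}} (-i\xi)^k e^{-i\xi y}\, \widehat{\mu}_t(\xi)\, d\xi .
$$
The whole statement then reduces, for $k = \lfloor p \rfloor$, to showing that $\xi \mapsto |\xi|^k \widehat{\mu}_t(\xi)$ is square integrable in $\xi$ almost surely, since by Plancherel this says precisely that $y \mapsto L^X(t,y)$ lies in the Sobolev space $H^k(\mathbb{R})$ and hence has $k$ spatial derivatives in $L^2$, given by the inverse transforms above.

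First I would establish the key second-moment identity. Since $X$ is Gaussian and the increment $X_s - X_{s'}$ is centred (as in our application, where $X_t = u(t, x + \omega(t))$ is a centred field), we have $E[e^{i\xi(X_s - X_{s'})}] = \exp\{-\tfrac{1}{2}\xi^2 E[(X_s - X_{s'})^2]\}$; for a non-centred process the mean contributes only a phase of modulus one, so the computation below persists with an inequality. Using Tonelli (the integrand is nonnegative) together with the elementary Gaussian integral $\int_{\mathbb{R}} |\xi|^{2k} e^{-\frac{1}{2}\xi^2 \sigma^2}\, d\xi = C_k\, \sigma^{-(2k+1)}$, one obtains
\begin{align*}
E\left[ \int_{\mathbb{R}} |\xi|^{2k}\, |\widehat{\mu}_t(\xi)|^2\, d\xi\right] & = \int_0^t\int_0^t \int_{\mathbb{R}} |\xi|^{2k} e^{-\frac{1}{2}\xi^2 E[(X_s - X_{s'})^2]}\, d\xi\, ds\, ds' \\
& = C_k \int_0^t\int_0^t \left( E[(X_s - X_{s'})^2]\right)^{-(k + 1/2)}\, ds\, ds' .
\end{align*}

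The final step is to invoke the hypothesis. With $k = \lfloor p \rfloor$ we have $k + \tfrac{1}{2} \le p + \tfrac{1}{2}$, and $\int_0^t\int_0^t \le \int_0^T\int_0^T$ since the integrand is nonnegative. As $[0,T]^2$ has finite measure and $E[(X_s - X_{s'})^2]$ is bounded above there, the only singularity of the integrand is where $E[(X_s - X_{s'})^2] \to 0$, and in that region the larger exponent $p + \tfrac{1}{2}$ dominates; hence finiteness of $\int_0^T\int_0^T (E[(X_t - X_{t'})^2])^{-p-1/2}\, dt\, dt'$ forces finiteness of the displayed integral with exponent $k + \tfrac{1}{2}$. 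Consequently $\int_{\mathbb{R}} |\xi|^{2k} |\widehat{\mu}_t(\xi)|^2\, d\xi < \infty$ almost surely, and the same computation with $k = 0$ gives $\widehat{\mu}_t \in L^2(d\xi)$, so by Plancherel $\mu_t$ is absolutely continuous with an $L^2$ density $L^X(t,\cdot) \in H^k(\mathbb{R})$.

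The main obstacle is not this computation but the rigorous identification and the mode of differentiability. One must justify that $\widehat{\mu}_t$ is genuinely the Fourier transform of a function (handled by the $k = 0$ bound and Plancherel), that differentiation under the integral sign is legitimate (controlled by the weighted $L^2$ bound together with the compact support of $\mu_t$, valid since $s \mapsto X_s$ is a.s. continuous, hence bounded), and that the resulting weak derivatives are the claimed objects. Upgrading these $L^2$/Sobolev derivatives to \emph{continuous} spatial derivatives, as the statement literally asserts, requires the sharper argument of estimating mixed moments $E[\,|\widehat{\mu}_t(\xi)|\,|\widehat{\mu}_t(\eta)|\,]$ and applying a Kolmogorov-type continuity criterion in the space variable, which is precisely where the fractional surplus $p - \lfloor p \rfloor$ is consumed. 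For the use made of this theorem in the sequel --- the integration by parts $\int_{\mathbb{R}} b'(y) L(t,y)\, dy = -\int_{\mathbb{R}} b(y) L'(t,y)\, dy$ with $\int_{\mathbb{R}} |L'(t,y)|\, dy < \infty$ --- the $H^1$ conclusion already suffices, since $L^X(t,\cdot)$ is compactly supported and thus $L' \in L^2 \subset L^1$.
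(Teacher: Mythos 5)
The paper never proves this theorem: it is imported verbatim as Theorem 28.1 of Geman--Horowitz \cite{GemanHorowitz}, so there is no internal proof to compare against, and your Fourier-analytic route is precisely the standard one underlying that reference. Your core computation is correct: $\widehat{\mu}_t(\xi)=\int_0^t e^{i\xi X_s}\,ds$, Tonelli/Fubini, and the Gaussian integral $\int_{\mathbb{R}}|\xi|^{2k}e^{-\frac12\xi^2\sigma^2}d\xi=C_k\sigma^{-(2k+1)}$ give
$$
E\Bigl[\int_{\mathbb{R}}|\xi|^{2k}|\widehat{\mu}_t(\xi)|^2\,d\xi\Bigr]=C_k\int_0^t\int_0^t\bigl(E[(X_s-X_{s'})^2]\bigr)^{-k-1/2}\,ds\,ds' ,
$$
and the exponent comparison (via $(\sigma^2)^{-(k+1/2)}\le 1+(\sigma^2)^{-(p+1/2)}$ pointwise, so boundedness of $\sigma^2$ is not even needed) shows this is finite for $k\le p$; in fact the same computation with non-integer exponent gives $L^X(t,\cdot)\in H^{p}(\mathbb{R})$ almost surely, not just $H^{\lfloor p\rfloor}$.

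Two caveats. First, your aside on non-centred processes is backwards: there $|E[e^{i\xi(X_s-X_{s'})}]|=\exp\{-\frac12\xi^2\,\mathrm{Var}(X_s-X_{s'})\}$ and $\mathrm{Var}(X_s-X_{s'})\le E[(X_s-X_{s'})^2]$, so the stated hypothesis bounds the wrong side of the inequality; the argument genuinely requires centred increments (which does hold in the paper's application, where $X_t=u(t,x+\omega(t))$ is centred Gaussian). Second, and more substantively, what you obtain is weak (Sobolev) differentiability, whereas the paper reads the theorem as ``the local time of $X_t$ is in $C^1$'' and later estimates $E[|\partial_yL^X(t,y)|^m]$ pointwise in $y$. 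In the only regime where the hypothesis can hold here ($E[(X_t-X_{t'})^2]\ge c\sqrt{t-t'}$ forces $p<3/2$), the surplus $p-\lfloor p\rfloor$ is strictly less than $1/2$, so Sobolev embedding of $H^p(\mathbb{R})$ yields only H\"{o}lder continuity of $L^X(t,\cdot)$ itself, never continuity of its derivative; the Kolmogorov-type upgrade you allude to is therefore not a routine add-on but the genuinely missing step if one insists on the classical reading of ``differentiable.'' That said, your closing observation is sound and important: the downstream uses in the paper (Lemma \ref{momentEstimate}, Proposition \ref{integralMoments}, and the integration by parts in Lemma \ref{DerivativeFreeBound}) only require $\partial_yL(t,\cdot)\in L^1$ together with increment moment bounds, and for this your $H^1$ conclusion combined with the compact support of $L^X(t,\cdot)$ suffices.
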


We see that the local time of $X_t$ is in $C^1$. 

Moreover, $X_t$ satisfies the following strong local non-determinism:
\begin{lemma}
For all $t_1 < \dots t_n< t \in [0,1]$ we have
$$
Var(X_t | X_{t_1}, \dots X_{t_n}) \geq c \sqrt{t-t_n}
$$
\end{lemma}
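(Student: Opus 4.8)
The goal is to bound below the conditional variance $\text{Var}(X_t \mid X_{t_1}, \dots, X_{t_n})$. Let me sketch the standard approach.

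The key reformulation is that for a centered Gaussian process, the conditional variance equals the squared distance in $L^2(\Omega)$ from $X_t$ to the closed linear span of $\{X_{t_1}, \dots, X_{t_n}\}$. That is,
$$
\text{Var}(X_t \mid X_{t_1}, \dots, X_{t_n}) = \inf_{a_1, \dots, a_n \in \mathbb{R}} E\left[\left(X_t - \sum_{j=1}^n a_j X_{t_j}\right)^2\right].
$$
So the plan is to show this infimum is at least $c\sqrt{t - t_n}$.

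**The approach I would take.**

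First I would translate everything into the isometry with $L^2([0,T]\times[0,1])$. Since $X_s = \int_0^s\int_0^1 G(s-r, x+\omega(s), y)\,dW(r,y) = W(g_s)$ where $g_s(r,y) := G(s-r, x+\omega(s), y)\,1_{[0,s]}(r)$, the Wiener isometry gives
$$
E\left[\left(X_t - \sum_{j=1}^n a_j X_{t_j}\right)^2\right] = \left\|\,g_t - \sum_{j=1}^n a_j g_{t_j}\,\right\|^2_{L^2([0,T]\times[0,1])}.
$$
Thus the conditional variance is the squared $L^2$-distance from $g_t$ to $\text{span}\{g_{t_1}, \dots, g_{t_n}\}$. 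To bound this below, I would exploit the \emph{disjoint support} coming from the time-indicators: each $g_{t_j}$ is supported on $[0,t_j]\times[0,1] \subseteq [0,t_n]\times[0,1]$ in the $r$-variable, whereas $g_t$ has a component living on the "fresh" time strip $(t_n, t]\times[0,1]$ where none of the $g_{t_j}$ contribute.

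**Key steps in order.**

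First I would decompose $L^2([0,T]\times[0,1])$ orthogonally via the time-variable $r$ into $L^2([0,t_n]\times[0,1]) \oplus L^2((t_n,t]\times[0,1])$. Second, I would observe that every $g_{t_j}$ (for $j \le n$) lies entirely in the first summand because $1_{[0,t_j]}$ vanishes on $(t_n, t]$. Consequently, for any coefficients $a_j$, the projection of $g_t - \sum_j a_j g_{t_j}$ onto the second summand is exactly the projection of $g_t$ alone. Third, since orthogonal projection only decreases norm, I obtain
$$
\left\|\,g_t - \sum_{j=1}^n a_j g_{t_j}\,\right\|^2 \geq \int_{t_n}^t \int_0^1 G^2(t - r, x + \omega(t), y)\,dy\,dr.
$$
Finally I would invoke the lower bound in (\ref{SemigroupHolderCont}), with $t' = t_n$, to conclude that this is at least $c\sqrt{t - t_n}$. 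Taking the infimum over the $a_j$ preserves the inequality, which yields the claim.

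**Main obstacle.**

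The crux is making precise that the time-indicators force the disjoint-support orthogonality, and that this orthogonality survives the composition with the \emph{random} shift $\omega$. The subtle point is that the spatial argument $x + \omega(t)$ differs from $x + \omega(t_j)$, so the $g_{t_j}$ are not simply truncations of $g_t$; the decomposition only helps because the separation is in the \emph{time} variable $r$, not the space variable. Since the lower bound in (\ref{SemigroupHolderCont}) is stated uniformly in $y \in [0,1]$ and only involves the integration strip $\int_{t'}^t$, the spatial shift is harmless, so this obstacle is manageable once the orthogonal-projection argument is set up correctly.
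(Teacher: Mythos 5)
Your proposal is correct and follows essentially the same route as the paper: the paper likewise identifies the conditional variance with the squared $L^2(\Omega)$-distance from $X_t$ to $\mathrm{span}\{X_{t_1},\dots,X_{t_n}\}$, writes $X_t-\sum_j\alpha_jX_{t_j}$ as a single Wiener integral, and lower-bounds its second moment by restricting the isometry to the time strip $(t_n,t]\times[0,1]$, where only the kernel of $X_t$ survives, then applies the lower bound in (\ref{SemigroupHolderCont}). Your explicit orthogonal-decomposition language and the remark that the random spatial shift is harmless (since (\ref{SemigroupHolderCont}) is uniform in $y$) merely spell out what the paper leaves implicit.
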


\begin{proof}
The conditional variance of $X_t$ given $X_{t_1}, \dots X_{t_n}$ is the square of the distance between $X_t$ and the subspace $span\{X_{t_1}, \dots X_{t_n}\}$. By distance here, we mean in the Hilbert-space $L^2(\Omega)$. We have that for $\alpha_1, \dots \alpha_n \in \mathbb{R}$
$$
X_t - \sum_{j=1}^n \alpha_j X_{t_j} 
$$
$$
\hspace{1cm}  = \int_0^T \int_0^1 1_{[0,t]} G(t-s,x+\omega(t),y) - \sum_{j=1}^n \alpha_n 1_{[0,t_j]} G(t_j-s,x+\omega(t_j),y) dW(y,s)  . \\
$$
We get that
\begin{align*}
E[ (X_t - \sum_{j=1}^n \alpha_j X_{t_j} )^2] & \geq \int_{t_n}^t \int_0^1 G^2(t-s,x+\omega(t),y) dy ds \\
& \geq c \sqrt{t-t_n} .
\end{align*}

\end{proof}

We have the following estimates on the local time

\begin{lemma} \label{momentEstimate}
There exists a constant $C$, not depending on $m$ such that 
$$
E[ | \partial_y L^X(t,y) |^m] \leq \frac{C^m t^{m/4} m!}{ (\Gamma ( \frac{m}{4} + 1))^{1/3} } .
$$

\end{lemma}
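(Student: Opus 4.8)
The plan is to reduce the moment to a deterministic multiple integral through the Fourier representation of the occupation density, and then to exploit Gaussianity together with the strong local non-determinism established just above. First I would work with the mollified occupation density $L_\varepsilon^X(t,y)=\int_0^t p_\varepsilon(X_s-y)\,ds$, where $p_\varepsilon$ is the Gaussian kernel of variance $\varepsilon$; since the quoted theorem of Geman and Horowitz guarantees $L^X(t,\cdot)\in C^1$, the derivatives $\partial_y L_\varepsilon^X(t,y)$ converge to $\partial_y L^X(t,y)$, so it suffices to bound the moments uniformly in $\varepsilon$. Writing $p_\varepsilon(z)=\frac{1}{2\pi}\int_{\mathbb{R}} e^{i\xi z-\varepsilon\xi^2/2}\,d\xi$ and differentiating, one gets for even $m$
\[
E[(\partial_y L_\varepsilon^X(t,y))^m]=\frac{(-i)^m}{(2\pi)^m}\int_{[0,t]^m}\int_{\mathbb{R}^m}\Big(\prod_{j=1}^m\xi_j\Big)e^{-iy\sum_j\xi_j}\,E\Big[e^{i\sum_j\xi_jX_{s_j}}\Big]\,e^{-\varepsilon\sum_j\xi_j^2/2}\,d\xi\,ds,
\]
and since $X$ is Gaussian the expectation equals $\exp(-\tfrac12\mathrm{Var}(\sum_j\xi_jX_{s_j}))$. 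Taking absolute values (which drops the $\varepsilon$-factor harmlessly, giving a bound uniform in $\varepsilon$) and exploiting the symmetry of the integrand, I would restrict the time integral to the simplex $\{0<s_1<\dots<s_m<t\}$ at the cost of a factor $m!$.

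The heart of the argument is to decouple the $\xi$-integral. For this I would upgrade the single-conditioning estimate of the preceding lemma, $\mathrm{Var}(X_t\mid X_{t_1},\dots,X_{t_n})\ge c\sqrt{t-t_n}$, to the quadratic-form bound
\[
\mathrm{Var}\Big(\sum_{j=1}^m\xi_jX_{s_j}\Big)\ge c\sum_{j=1}^m\xi_j^2\,(s_j-s_{j-1})^{1/2},\qquad s_0:=0,
\]
valid on the ordered simplex. This follows by successive conditioning: conditioning on $X_{s_1},\dots,X_{s_{m-1}}$ and using that the conditional variance of $X_{s_m}$ is at least $c\sqrt{s_m-s_{m-1}}$ isolates the last term, and one iterates downward. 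Inserting this bound into the Gaussian factor dominates the integrand by a product in the $\xi_j$, so the $\xi$-integral factorises.

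Each factor is then elementary, $\int_{\mathbb{R}}|\xi|\,e^{-a\xi^2}\,d\xi=1/a$ with $a=\tfrac{c}{2}(s_j-s_{j-1})^{1/2}$, which produces $\prod_{j=1}^m (s_j-s_{j-1})^{-1/2}$ up to a factor $C^m$. The remaining time integral over the simplex is a Dirichlet (Beta) integral,
\[
\int_{\{0<s_1<\dots<s_m<t\}}\prod_{j=1}^m(s_j-s_{j-1})^{-1/2}\,ds=\frac{\Gamma(1/2)^m}{\Gamma(\tfrac{m}{2}+1)}\,t^{m/2},
\]
as one sees after the substitution $u_j=s_j-s_{j-1}$. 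Collecting the factor $m!$ from the symmetrisation yields a sharp estimate of the shape $C^m t^{m/2} m!/\Gamma(\tfrac m2+1)$. Since only $t\in[0,1]$ matters, I would finally weaken this to the stated form $C^m t^{m/4} m!/(\Gamma(\tfrac m4+1))^{1/3}$, using $t^{m/2}\le t^{m/4}$ and absorbing the comparison of the two Gamma factors into the constant $C$.

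The main obstacle I anticipate is the presence of the absolute values $\prod_j|\xi_j|$: the Fourier representation yields a clean, exactly computable expression only for even $m$, where $|\partial_y L^X|^m=(\partial_y L^X)^m$. For odd (or general) $m$ I would either interpolate between consecutive even moments via Cauchy--Schwarz, or majorise $E[|\partial_y L^X|^m]$ directly at the price of a Hölder splitting; this lossy passage is the likely source of the exponent $1/3$ and of the reduced power $t^{m/4}$ in the final statement. The second delicate point is the rigorous justification of the representation and of the interchange of the limit $\varepsilon\to0$ with the (now dominated) multiple integral, which is exactly where the $C^1$-regularity of $L^X$ from the Geman--Horowitz theorem and the uniform-in-$\varepsilon$ bound are needed.
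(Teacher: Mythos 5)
Your overall route (Fourier representation of the occupation density, Gaussianity, local non-determinism, symmetrisation onto the simplex, Dirichlet-type time integral) is the same as the paper's, but the single step on which your whole argument hinges is false. You claim that the conditional bound $\mathrm{Var}(X_t \mid X_{t_1},\dots,X_{t_n}) \ge c\sqrt{t-t_n}$ upgrades ``by successive conditioning'' to
\[
\mathrm{Var}\Bigl(\sum_{j=1}^m \xi_j X_{s_j}\Bigr) \ \ge\ c \sum_{j=1}^m \xi_j^2 \,(s_j-s_{j-1})^{1/2}
\]
with the \emph{original} coefficients $\xi_j$. This is not true. Take $m=2$, $\omega$ constant (so $X_s=u(s,x)$), $s_1=1$, $s_2=1+\varepsilon$, $\xi_1=1$, $\xi_2=-1$: the left-hand side is $\mathrm{Var}(X_{s_2}-X_{s_1})$, which tends to $0$ as $\varepsilon\to 0$ (the process $s\mapsto X_s$ is continuous in $L^2(\Omega)$, by path continuity plus Gaussian uniform integrability), while your right-hand side is at least $c\sqrt{s_1}=c>0$. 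The reason successive conditioning fails is that when you project out $X_{s_m}$, the orthogonal decomposition leaves a residual sum with \emph{modified} coefficients $\xi_j+\xi_m\beta_j$ ($\beta_j$ being the regression coefficients of $X_{s_m}$ on the earlier variables), so iterating isolates only the last term, never the full quadratic form. Local non-determinism yields a quadratic-form lower bound only in terms of \emph{increments}: with $v_j=\sum_{k\ge j}\xi_k$ one has $\sum_j \xi_j X_{s_j}=\sum_j v_j(X_{s_j}-X_{s_{j-1}})$ and $\mathrm{Var}\bigl(\sum_j v_j(X_{s_j}-X_{s_{j-1}})\bigr)\ge c\sum_j v_j^2\,\mathrm{Var}(X_{s_j}-X_{s_{j-1}})$, which is exactly the form the paper uses after its change of variables $u=Mv$.

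Once you make this correction, the Fourier multiplier $\prod_j|\xi_j|$ becomes $\prod_j|v_j-v_{j+1}|$ in the increment variables, and your Gaussian integral no longer factorises; this coupled product is precisely the core difficulty of the lemma, and it is the point your proposal assumes away. The paper resolves it by writing
\[
\int_{\mathbb{R}^m} \exp\Bigl\{-\tfrac{c^2}{2}\sum_j v_j^2\sqrt{s_j-s_{j-1}}\Bigr\} \prod_{j=1}^m |v_j-v_{j+1}|\,dv \ =\ (2\pi)^{m/2}|\Sigma|^{1/2}\,E\Bigl[\prod_{j=1}^m |Y_j|\Bigr],
\]
where $Y=MX$, $X\sim\mathcal{N}(0,\Sigma)$ with $\Sigma$ diagonal, then invoking the Li--Wei inequality $E[\prod_j|Y_j|]\le\sqrt{\mathrm{per}(M\Sigma M^T)}$, and finally splitting the simplex integral by H\"older into a Dirichlet factor (which produces the $t^{m/4}$ and the Gamma decay) and a permanent factor bounded by $C^m$ --- the latter estimate requiring the entire appendix. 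This also shows that your diagnosis of the exponent $1/3$ is off: it does not come from an odd/even interpolation (that reduction, like your mollification step, is harmless and standard), but from the H\"older exponents $p=3$, $q=3/2$ forced by integrability of the permanent term; correspondingly, your ``sharp'' intermediate bound $C^m t^{m/2} m!/\Gamma(\tfrac m2+1)$ is not something your argument, or the paper's, actually delivers.
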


\begin{proof}
We note that it is sufficient to prove that 
$$
E[ |L^X(t,y+h) - L^X(t,y)|^m] \leq \frac{C^m |h|^m t^{m/4} m!}{ (\Gamma ( \frac{m}{4} + 1))^{1/3}}
$$
for all real numbers $h$. To this end, we will use the local time formula 
$$
L^X(t,y) = (2\pi)^{-1}  \int_0^t \int_{\mathbb{R}}\exp\{ iu(X_s - y) \} du ds
$$
i.e. using the Fourier transform method.

We have 

\begin{align*}
|L(t,y+h) - L(t,y)|^m & = (2\pi)^{-m}  |\int_0^t \int_{\mathbb{R}}\exp\{ iu(X_s - y) \} \left( e^{-iuh}-1 \right) du ds|^m \\ 
& = (2 \pi)^{-m} m! \int_{0 < s_1 < \dots s_m < t}  \int_{\mathbb{R}^m } \prod_{j=1}^m \exp\{ iu_j(X_{s_j} - y) \}  \\ 
& \, \, \, \, \, \, \, \, \,  \times \prod_{j=1}^{m} \left( e^{-iu_jh}-1 \right) du_1 \dots du_m   ds_1 \dots ds_m. \\ 
& = (2 \pi)^{-m} m! \int_{0 < s_1 < \dots s_m < t}   \int_{\mathbb{R}^m } \exp\{ i\sum_{j=1}^m v_j(X_{s_j} - X_{s_{j-1}}) \} \\
& \, \, \, \, \, \, \, \, \,  \times \prod_{j=1}^{m} \left( e^{-i(v_j - v_{j+1})h}-1 \right) dv_1 \dots dv_m  ds_1 \dots ds_m. \\ 
\end{align*}

Above we have used the change of variables $u_m = v_m$ and $u_j = v_j - v_{j+1}$, that is $u = Mv$ where 
$$ 
M = 
\left( \begin{array}{rrrrr}
1 & -1 & \dots & 0 & 0 \\
0 & 1 & \dots & 0 & 0 \\
\vdots & \vdots  & \ddots & \vdots &  \vdots\\ 
0 & \dots & \dots & 1 & -1 \\
0 & \dots & \dots & 0 & 1  \\
\end{array}
\right) .
$$

For notational convenience we have used $X_{s_0} = y$ and $v_{m+1} = 0$.

Taking the expectation we get

\begin{align*}
E[ |L(t,y+h) - L(t,y)|^m ] & = (2 \pi)^{-m} m! \int_{0 < s_1 < \dots s_m < t}   \int_{\mathbb{R}^m } | E[\exp\{ i\sum_{j=1}^m v_j(X_{s_j} - X_{s_{j-1}}) \} ]| \\
& \hspace{0.7cm}  \times \prod_{j=1}^{m} \left| e^{-i(v_j - v_{j+1})h}-1 \right| dv_1 \dots dv_m  ds_1 \dots ds_m. \\ 
& = (2 \pi)^{-m} m! \int_{0 < s_1 < \dots s_m < t}  \int_{\mathbb{R}^m } \exp\{ -\frac{1}{2} Var( \sum_{j=1}^m v_j(X_{s_j} - X_{s_{j-1}})) \}  \\
& \hspace{0.7cm}  \times \prod_{j=1}^{m} \left| e^{-i(v_j - v_{j+1})h}-1 \right| dv_1 \dots dv_m  ds_1 \dots ds_m. \\
& \leq (2 \pi)^{-m} m! \int_{0 < s_1 < \dots s_m < t}   \int_{\mathbb{R}^m } \exp\{ -\frac{c}{2} \sum_{j=1}^m v^2_jVar(X_{s_j} - X_{s_{j-1}})) \}  \\
& \hspace{0.7cm}  \times \prod_{j=1}^{m} \left| e^{-i(v_j - v_{j+1})h}-1 \right| dv_1 \dots dv_m  ds_1 \dots ds_m. \\
& \leq (2 \pi)^{-m}|h|^m m! \int_{0 < s_1 < \dots s_m < t}  \int_{\mathbb{R}^m } \exp\{ -\frac{c^2}{2} \sum_{j=1}^m v^2_j \sqrt{s_j - s_{j-1}} \}  \\
& \hspace{0.7cm}  \times \prod_{j=1}^{m} \left| v_j - v_{j+1} \right| dv_1 \dots dv_m  ds_1 \dots ds_m. \\
\end{align*}

where we have used the local non-determinism in the second-to-last inequality, and $Var(X_{s_j} - X_{s_{j-1}}) \geq c \sqrt{s_j - s_{j-1}}$ in the last.

We write
\begin{align*}
\int_{\mathbb{R}^m } \exp\{ -\frac{c^2}{2} \sum_{j=1}^m v^2_j \sqrt{s_j - s_{j-1}} \}  \prod_{j=1}^{m} \left| v_j - v_{j+1} \right| dv_1 \dots dv_m  \\
= (2\pi)^{m/2} |\Sigma|^{1/2} E[ \prod_{j=1}^m |X_j - X_{j+1}| ] \\
\end{align*}

where $X \sim \mathcal{N}(0, \Sigma)$, and we have defined $(\Sigma)_{j,k} = \delta_{j,k} (c^2 \sqrt{s_j - s_{j-1}})^{-1}$. Let $Y = MX$, so that $Y \sim \mathcal{N}(0,M \Sigma M^T)$ and it follows from \cite{LiWei} that 
$$
E[ \prod_{j=1}^m |X_j - X_{j+1}| ] = E[ \prod_{j=1}^m |Y_j| ] \leq \sqrt{ \textrm{per}(M \Sigma M^T)} .
$$
Above, per($A$) denotes the permanent of the matrix $A$. Consequently 
\begin{align*}
\int_{\mathbb{R}^m } \exp\{ -\frac{c^2}{2} \sum_{j=1}^m v^2_j \sqrt{s_j - s_{j-1}} \}  \prod_{j=1}^{m} \left| v_j - v_{j+1} \right| dv_1 \dots dv_m  ds_1 \dots ds_m  \\
\leq (2\pi)^{m/2} \sqrt{|\Sigma|} \sqrt{ \textrm{per}(M \Sigma M^T)}  .\\
\end{align*}

Using H\"{o}lder's inequality we get  
\begin{align*}
\int_{0 < s_1 < \dots < s_m < t}  \sqrt{|\Sigma| \textrm{per}(M \Sigma M^T)} ds & \leq \left(\int_{0 < s_1 < \dots < s_m < t}  |\Sigma|^{p/2} ds \right)^{1/p} \\
& \times \left(\int_{0 < s_1 < \dots < s_m < t}  |\textrm{per}(M \Sigma M^T)|^{q/2} ds \right)^{1/q}. \\
\end{align*}

One can check that there exists a constant $C_1>0$, such that 
\begin{align*}
 \int_{0 < s_1 < \dots < s_m < t}  |\Sigma|^{p/2} ds  & =   c^{-pm} \int_{0 < s_1 < \dots < s_m < t} \prod_{j=1}^m |s_j - s_{j-1}|^{-p/4} ds \\
& \leq \frac{C_1^m t^{(4-p)m/4}}{\Gamma{ (\frac{(4-p)m}{4}+1})} \\
\end{align*}
when $p < 4$. We can find a constant $C_2 > 0$ such that

$$
 \int_{0 < s_1 < \dots < s_m < t}   |\textrm{per}(M \Sigma M^T)|^{q/2} ds  \leq C_2^m
$$
when $q < 2$. The proof is technical and is postponed to the Appendix, Section \ref{appendix}.

This gives 

$$
E[|L(t,y+h)- L(t,y)|^m] \leq \frac{C^m |h|^m t^{m/4} m!}{ (\Gamma ( \frac{(4-p)m}{4} + 1))^{1/p}}
$$

for an appropriate constant $C$, and we choose $p=3$ and $q = 3/2$ to get the result.

\end{proof}

We are ready to conclude this section with its most central result:

\begin{prop} \label{integralMoments}
There exists a constant $C > 0$ such that 
$$
E\left[ \left( \int_{\mathbb{R}} | \partial_y L(t,y) | dy \right )^m \right] \leq \frac{C^m t^{m/4} \sqrt{(2m)!}}{ (\Gamma ( \frac{m}{2} + 1))^{1/6}}
$$
\end{prop}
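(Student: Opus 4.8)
The plan is to reduce the intractable spatial $L^1$-norm $\int_{\mathbb{R}}|\partial_y L(t,y)|\,dy$ to a spatial $L^2$-norm, which (unlike the $L^1$-norm) admits a clean Fourier representation, paying for the reduction with the diameter of the support of $L(t,\cdot)$. The conceptual obstacle is the pointwise absolute value inside the integral: the Fourier/occupation method of Lemma \ref{momentEstimate} linearises only for squares or even powers (there $L$ is nonnegative and one exploits $|\cdot|^m=(\cdot)^m$), whereas $\partial_y L$ changes sign and its \emph{signed} spatial integral vanishes, so no such trick applies to $\int|\partial_y L|\,dy$ directly. Cauchy--Schwarz is precisely what turns this first-power, absolute-value quantity into a tractable square.

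First I would record that, for a fixed path, $L(t,\cdot)$ is supported in the compact interval $J=[\inf_{s\le t}X_s,\ \sup_{s\le t}X_s]$, so $\partial_y L(t,\cdot)$ vanishes off $J$ and $|J|\le 2R$ with $R:=\sup_{s\le t}|X_s|$. A pathwise Cauchy--Schwarz on $J$ then gives
$$\int_{\mathbb{R}}|\partial_y L(t,y)|\,dy=\int_J|\partial_y L(t,y)|\,dy\le |J|^{1/2}\Big(\int_{\mathbb{R}}(\partial_y L(t,y))^2\,dy\Big)^{1/2}.$$
Raising to the $m$-th power and applying Cauchy--Schwarz in $\Omega$,
$$E\Big[\Big(\int_{\mathbb{R}}|\partial_y L|\,dy\Big)^m\Big]\le \big(E[|J|^m]\big)^{1/2}\Big(E\Big[\Big(\int_{\mathbb{R}}(\partial_y L)^2\,dy\Big)^m\Big]\Big)^{1/2}.$$

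For the first factor, $R$ is the supremum of a centred Gaussian process whose marginal variances are bounded by $C\sqrt t$ (by (\ref{SemigroupHolderCont})), so by the Borell--TIS inequality (or Fernique's theorem) $R$ is sub-Gaussian with scale $\sim t^{1/4}$, giving $\big(E[|J|^m]\big)^{1/2}\le C^m t^{m/8}\,\Gamma(m/2+1)^{1/2}$. For the second factor I would use the crucial fact that the spatial integration collapses the two frequency variables: inserting the Fourier representation of $\partial_y L$ from Lemma \ref{momentEstimate} and integrating $e^{-i(u+u')y}$ over $y$ forces $u'=-u$, whence
$$\int_{\mathbb{R}}(\partial_y L(t,y))^2\,dy=(2\pi)^{-1}\int_0^t\int_0^t\int_{\mathbb{R}}u^2\,e^{iu(X_s-X_{s'})}\,du\,ds\,ds'.$$
This quantity is \emph{free of} the spatial variable, so its moments can be estimated by exactly the machinery of Lemma \ref{momentEstimate}: expand the $m$-th power into a $2m$-fold time integral, bound each characteristic function by $\exp\{-\tfrac12\mathrm{Var}(\sum_k u_k(X_{s_k}-X_{s_k'}))\}$, invoke the strong local non-determinism together with the permanent bound of \cite{LiWei}, and evaluate the ordered time integral against Gamma functions. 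This should produce, up to absorbing harmless powers of $t\le T$ into the constant, a bound of the form $E[(\int(\partial_y L)^2\,dy)^m]\le C^m t^{m/4}(2m)!/\Gamma(m/2+1)^{4/3}$.

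Multiplying the two factors reproduces the asserted estimate exactly, since $t^{m/8}\cdot t^{m/8}=t^{m/4}$ and $\Gamma(m/2+1)^{1/2}\cdot\Gamma(m/2+1)^{-2/3}=\Gamma(m/2+1)^{-1/6}$. Thus the obstacle is twofold and both parts are dissolved by the support reduction: the absolute value (handled by passing to the square via Cauchy--Schwarz) and the a priori unbounded spatial integration (handled by compactness of the support and the sub-Gaussian control of its diameter). The one delicate point I expect to be the real work is verifying that the second-moment functional genuinely has the sharp growth $(2m)!/\Gamma(m/2+1)^{4/3}$ rather than the naive pointwise growth $\sqrt{E[|\partial_y L|^{2m}]}$; this is exactly where the \emph{exact} spatial integration $u'=-u$, and not a uniform-in-$y$ estimate, must be exploited, and where I anticipate the bulk of the technical bookkeeping.
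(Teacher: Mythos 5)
Your overall skeleton (pathwise Cauchy--Schwarz against the support, sub-Gaussian control of the range, and a Fourier treatment of $\int(\partial_y L)^2\,dy$) is coherent, and your exponent bookkeeping at the end is consistent. But the proposal has a genuine gap exactly where you yourself locate ``the bulk of the technical bookkeeping'': the claimed bound $E\bigl[\bigl(\int_{\mathbb{R}}(\partial_y L)^2\,dy\bigr)^m\bigr]\le C^m t^{m/4}(2m)!/\Gamma(m/2+1)^{4/3}$ does \emph{not} follow ``by exactly the machinery of Lemma \ref{momentEstimate}''. In that lemma the $m$ time points form a single ordered chain, the frequency variables are tied to the increments $X_{s_j}-X_{s_{j-1}}$ through the \emph{fixed} bidiagonal matrix $M$, the resulting covariance is the diagonal $\Sigma$, and the polynomial weight $\prod_j|v_j-v_{j+1}|$ is handled by the Li--Wei permanent bound. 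In your moment expansion the $2m$ time points come in $m$ pairs $(s_k,s_k')$, and after ordering them the incidence of each frequency $u_k$ with the increments depends on how the pairs interleave: the quadratic form is $\sum_j\bigl(\sum_k\epsilon_{kj}u_k\bigr)^2\sqrt{\Delta_j}$ with an ordering-dependent matrix $(\epsilon_{kj})$, it can be nearly degenerate (e.g.\ nested or overlapping pairs straddling the same intervals), and the weight is now $\prod_k u_k^2$ rather than a product of consecutive differences. Controlling $\int\prod_k u_k^2\,e^{-cQ(u)}du$ uniformly over all $(2m)!$ interleavings, with constants sharp enough to keep the $(2m)!/\Gamma(\cdot)$ structure, is precisely the difficulty known from moment estimates for (derivatives of) self-intersection local times; it would require a new combinatorial/Gaussian analysis comparable to, and arguably harder than, Lemma \ref{momentEstimate} together with the paper's Appendix. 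Asserting the outcome is not enough, so as it stands the proof is incomplete.

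For contrast, the paper's own proof never touches the spatial $L^2$-norm of $\partial_y L$ and needs no new Fourier estimate. It splits $\mathbb{R}$ into $\{|y|<1\}$ and $\{|y|\ge 1\}$: on the bounded region, Jensen/H\"older reduces the $m$-th power of the integral to $\sup_y\bigl(E[|\partial_y L(t,y)|^{2m}]\bigr)^{1/2}$; on the tail, the support inclusion $\mathrm{supp}\,L(t,\cdot)\subset[-X_t^*,X_t^*]$ combined with Chebyshev gives a factor $\bigl(E[|X_t^*|^4]\bigr)^{1/2}|y|^{-2}$ per coordinate, which is integrable at infinity, with $E[|X_t^*|^4]<\infty$ supplied by Fernique. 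Both regions are then closed by Lemma \ref{momentEstimate} applied at order $2m$ --- this is where the factor $\sqrt{(2m)!}/(\Gamma(\frac m2+1))^{1/6}$ in the statement comes from. If you want to rescue your route, the honest task is to prove your intermediate estimate; but note that the paper's decomposition achieves the same conclusion using only the pointwise moment bounds already available.
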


\begin{proof}
We begin by noting that for any $p \geq 1$ we have $ E[ \sup_{0 \leq t \leq T} |X_t|^p] < \infty$. To see this, note that we may regard $u(t,x)$ as a $C([0,T] \times [0,1])$-valued Gaussian random variable. From \cite{Fernique} we get that $E[\|u\|_{\infty}^p] < \infty $ for all $p \geq 1$, so that 
$$
E[\sup_{0 \leq t \leq T} |X_t|^p] = E[ \sup_{0 \leq t \leq T} |u(t,x + \omega(t))|^p]  \leq E[ \sup_{(t,y) \in [0,T] \times [0,1]} |u(t,y)|^p] < \infty .
$$


We may write
\begin{align*}
E\left[ \left( \int_{\mathbb{R}} | \partial_y L(t,y) | dy \right )^m \right] & = E\left[ \left( \int_{\mathbb{R}} | \partial_y L(t,y) | dy \right )^m \right] \\
& \leq 2^{m-1} E\left[ \left( \int_{|y| < 1} | \partial_y L(t,y) | dy \right )^m \right] \\
& + 2^{m-1} E\left[ \left( \int_{|y|\geq 1} | \partial_y L(t,y) | dy \right )^m \right].
\end{align*}
For the first term we can estimate
\begin{align*}
E\left[ \left( \int_{|y| < 1} | \partial_y L(t,y) | dy \right )^m \right] & \leq  \int_{|y|\leq 1} E[| \partial_y L(t,y) |^m] dy \\
& \leq  \sup_{ y \in \mathbb{R}} \left( E[| \partial_y L(t,y) |^{2m}] \right)^{1/2} .\\
\end{align*}

For the second term, we note that from (\ref{occupationFormula}) that the support of  $L(t,\cdot)$ is included in the interval $[ - X_t^*, X_t^* ]$. This gives
\begin{align*}
E\left[ \left( \int_{|y|\geq 1} 1_{\{|y| \leq X_t^*\}}| \partial_y L(t,y) | dy \right )^m \right] & =   \int_{B} E\left[ \prod_{j=1}^m 1_{\{|y_j| \leq X_t^*\}}| \partial_y L(t,y_j) | \right] dy \\
& \leq  \int_{B} \prod_{j=1}^m \left(E\left[ 1_{\{ |y_j| \leq X_t^*\}}| \partial_y L(t,y_j) |^m \right]\right)^{1/m} dy .
\end{align*}
Above we have denoted $B = \{ y \in \mathbb{R}^m | \, |y_j| \geq 1 \, \forall j \, \}$. We use the estimate
\begin{align*}
E\left[ 1_{\{ |y_j| \leq X_t^*\}}| \partial_y L(t,y_j) |^m \right] & \leq \left( P(|y_j| \leq X_t^* )\right)^{1/2}  \left( E[| \partial_y L(t,y_j) |^{2m}] \right)^{1/2} \\
 & \leq ( E[|X_t^*|^4] )^{1/2}|y_j|^{-2} \sup_{ y \in \mathbb{R}} \left( E[| \partial_y L(t,y) |^{2m}] \right)^{1/2} \\
\end{align*}
where we have used Chebyshevs inequality in the last step. This gives
$$
E\left[ \left( \int_{|y|\geq 1} 1_{\{|y| \leq X_t^*\}}| \partial_y L(t,y) | dy \right )^m \right]  
$$

$$
\leq   \sup_{ y \in \mathbb{R}} \left( E[| \partial_y L(t,y) |^{2m}] \right)^{1/2} ( E[|X_t^*|^4] )^{1/2} \left( \int_{|y| \geq 1} |y|^{-2} dy \right)^m .
$$
The result follows from Lemma \ref{momentEstimate}.

\end{proof}


\section{Derivative free estimates} \label{DerivativeFreeSection}

In this section we assume that $b \in C^1_c (\mathbb{R})$ and denote by $u$ the solution to (\ref{SPDE}).

Since $b$ is continuously differentiable it is well known that $u(t,x)$ is Malliavin differentiable, and we have 
$$
D_{\theta, \xi} u(t,x) = G(t-\theta,x,\xi) + \int_{\theta}^t \int_0^1 G(t-s, x,y)b'(u(s,y)) D_{\theta, \xi}u(s,y) dy ds.
$$

Let now $h \in C^2([0,T] \times [0,1])$. Then the random field 
\begin{align*}
v(t,x) & := \int_0^T \int_0^1D_{\theta, \xi} u(t,x) h(\theta, \xi) d\xi d\theta \\
 & = \int_0^t \int_0^1D_{\theta, \xi} u(t,x) h(\theta, \xi) d\xi d\theta \\
\end{align*}
satisfies the following linear equation 
$$
v(t,x) = \int_0^t \int_0^1 G(t-\theta,x,\xi)h(\theta , \xi)d\xi d\theta + \int_0^t \int_0^1 G(t-s, x,y)b'(u(s,y)) v(s,y) dy ds,
$$ 
or, equivalently
$$
\frac{\partial}{\partial t} v(t,x) = \frac{\partial^2}{\partial x^2} v(t,x) + b'(u(t,x)) v(t,x) + h(t,x) , \hspace{0.5cm} (t,x) \in (0, T] \times (0,1) ,
$$
with initial condition $v(0,x) = 0$ and Neumann boundary conditions.

If we let $\mu_x$ denote the measure on $(C([0,T]), \mathcal{B}(C([0,T]))$ such that $\omega \mapsto \omega(s)$ is a doubly reflected (in 0 and 1) Brownian motion starting in $x$, then we get from the Feynman-Kac formula that the above equation is uniquely solved by
\begin{equation} \label{FeynmanKacFormula}
v(t,x) = \int_{C([0,T])} \int_0^t h(t-r, \omega(r)) \exp\{ \int_0^r b'(u(t-s, \omega(s))) ds \} dr d\mu_x(\omega) .
\end{equation}

\begin{lemma} \label{DerivativeFreeBound}
There exists an increasing continuous function \\ $C: [0,\infty) \rightarrow [0, \infty)$ such that 
$$
E[ v^2(t,x)] \leq C(\|b\|_{\infty} ) \left( \int_{C([0,T])} \int_0^t |h(t-r,\omega(r))| dr d\mu_x(\omega) \right)^2 .
$$

\end{lemma}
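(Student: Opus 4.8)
The plan is to start from the Feynman--Kac representation (\ref{FeynmanKacFormula}) and to bound the random exponential weight $\exp\{\int_0^r b'(u(t-s,\omega(s)))\,ds\}$ by a quantity that involves $b$ only through $\|b\|_\infty$, exactly along the lines sketched in the introduction. Fix the noise $W$ and a path $\omega$, and consider the real-valued process $s\mapsto u(t-s,\omega(s))$ on $[0,r]$, whose occupation measure has (by Section \ref{LocalTime}) a density $L^\omega(r,\cdot)$ that is $C^1$ in space and compactly supported, the support lying in the range of the process. The occupation time formula (\ref{occupationFormula}) gives $\int_0^r b'(u(t-s,\omega(s)))\,ds=\int_{\mathbb{R}}b'(y)L^\omega(r,y)\,dy$, and since $b\in C^1_c$ and $L^\omega(r,\cdot)$ has compact support, integration by parts produces no boundary terms, so that
\[
\Big|\int_0^r b'(u(t-s,\omega(s)))\,ds\Big|=\Big|\int_{\mathbb{R}} b(y)\,\partial_y L^\omega(r,y)\,dy\Big|\le \|b\|_\infty\int_{\mathbb{R}}|\partial_y L^\omega(r,y)|\,dy.
\]
Substituting into (\ref{FeynmanKacFormula}) yields the pathwise bound
\[
|v(t,x)|\le \int_{C([0,T])}\int_0^t |h(t-r,\omega(r))|\,e^{\,\|b\|_\infty\int_{\mathbb{R}}|\partial_y L^\omega(r,y)|\,dy}\,dr\,d\mu_x(\omega).
\]

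Next I would separate the factor $|h|$ from the exponential weight. Writing $I:=\int_{C([0,T])}\int_0^t|h(t-r,\omega(r))|\,dr\,d\mu_x(\omega)$ and applying Cauchy--Schwarz with respect to the finite measure $|h(t-r,\omega(r))|\,dr\,d\mu_x(\omega)$ gives
\[
v(t,x)^2\le I\int_{C([0,T])}\int_0^t|h(t-r,\omega(r))|\,e^{\,2\|b\|_\infty\int_{\mathbb{R}}|\partial_y L^\omega(r,y)|\,dy}\,dr\,d\mu_x(\omega).
\]
Taking expectations and using Tonelli (all integrands are nonnegative) reduces the statement to the uniform exponential-moment bound $C(\|b\|_\infty):=\sup_{r\le t,\ \omega}E\big[\exp\{2\|b\|_\infty\int_{\mathbb{R}}|\partial_y L^\omega(r,y)|\,dy\}\big]<\infty$, since then $E[v^2(t,x)]\le C(\|b\|_\infty)\,I^2$, which is exactly the claim.

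It remains to show this supremum is finite and defines an increasing, continuous $C$ in $\lambda:=\|b\|_\infty$. Expanding the exponential and invoking Proposition \ref{integralMoments} (with $t$ replaced by its bound $T$), the $m$-th term is controlled by $\frac{(2\lambda)^m}{m!}\cdot\frac{C^mT^{m/4}\sqrt{(2m)!}}{(\Gamma(m/2+1))^{1/6}}$. By Stirling one has $\sqrt{(2m)!}/m!\sim 2^m\,\mathrm{poly}(m)$ while $(\Gamma(m/2+1))^{1/6}$ grows like $(m/2)^{m/12}e^{-m/12}$, so the general term is $O\big(K^m m^{-m/12}\big)$ for a constant $K=K(\lambda)$; this super-exponentially small factor forces convergence for every $\lambda\ge 0$. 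Monotonicity in $\lambda$ is immediate from the sign of the exponent, and continuity follows from the locally uniform convergence of the resulting power series, giving the desired increasing continuous $C$.

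The main obstacle is that Proposition \ref{integralMoments} was proved for the \emph{driftless} equation, whereas $s\mapsto u(t-s,\omega(s))$ is built from the full solution of (\ref{SPDE}) and is therefore not Gaussian. One cannot feed it directly into Lemma \ref{momentEstimate}, whose proof rests on the identity $|E[\exp\{i\sum_j v_j(Y_{s_j}-Y_{s_{j-1}})\}]|=\exp\{-\tfrac12\mathrm{Var}(\cdots)\}$. I would bridge this by a change of measure: as $b$ is bounded, Girsanov's theorem gives an equivalent measure under which $u$ has the law of the driftless field of Section \ref{LocalTime}, with a Radon--Nikodym density whose moments are controlled by $\|b\|_\infty^2 T$. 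A Cauchy--Schwarz split then transfers the exponential-moment bound from the Gaussian process---which does obey the increment lower bound coming from (\ref{SemigroupHolderCont}) and the strong local nondeterminism, uniformly in $\omega$ and $x$---to the drifted one, enlarging $C(\|b\|_\infty)$ in a still increasing and continuous fashion.
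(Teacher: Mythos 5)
Your proposal is correct and follows essentially the same route as the paper's proof: Feynman--Kac, a Cauchy--Schwarz split to isolate the exponential weight, a Girsanov change of measure to reduce to the Gaussian driftless field, the occupation-time-density formula plus integration by parts, and the exponential-moment bound from Proposition \ref{integralMoments} summed via Stirling. The only difference is ordering --- the paper changes measure \emph{before} integrating by parts, so the local-time regularity is invoked directly for the Gaussian process, whereas you integrate by parts first under $P$ and justify the local time's $C^1$ regularity there by equivalence of measures --- an inessential rearrangement of the same ingredients.
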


\begin{proof}
Define the measure $\tilde{P}$ by 
\begin{align*}
d\tilde{P} & := Z dP\\
Z & := \exp\{ - \int_0^T \int_0^1 b(u(s,y)) dW(y,s) - \frac{1}{2} \int_0^T \int_0^1 b^2(u(s,y)) dyds \}.
\end{align*}
Then $\tilde{P}$ is a probability measure and under $\tilde{P}$,
$$
d\tilde{W}(y,s) := b(u(s,y)) dsdy + dW(y,s)
$$
is space-time white noise. Under this measure we have that $u$ is Gaussian, and more precisely
$$
u(t,x) = \int_0^t \int_0^1 G(t-s,x,y) d\tilde{W}(y,s) .
$$

From (\ref{FeynmanKacFormula}) we double the variables to get
\begin{align*}
E[v^2(t,x)]  & = \int_{C([0,T])} \int_{C([0,T])} \int_0^t \int_0^t h(t-r_1, \omega(r_1)) h(t-r_2, \tilde{\omega}(r_2)) \\
 & \times E\left[  \exp\{ \int_0^{r_1} b'(u(t-s,  \omega(s))) ds \} \right. \\
& \times \left. \exp\{ \int_0^{r_2} b'(u(t-s,  \tilde{\omega}(s))) ds \} \right]dr_1 dr_2 d\mu_x(\omega) d\mu_x( \tilde{\omega}) \\
&  \leq \int_{C([0,T])} \int_{C([0,T])} \int_0^t \int_0^t |h(t-r_1,  \omega(r_1))| |h(t-r_2,  \tilde{\omega}(r_2))| \\
 & \times \left( E\left[  \exp\{ 2\int_0^{r_1} b'(u(t-s, \omega(s))) ds \} \right] \right)^{1/2} \\
& \times \left( E \left[ \exp\{ 2\int_0^{r_2} b'(u(t-s,  \tilde{\omega}(s))) ds \} \right] \right)^{1/2} dr_1 dr_2 d\mu_x(\omega) d\mu_x( \tilde{\omega}) \\
\end{align*}

Now we write 
\begin{align*}
E \left[ \exp\{ 2\int_0^{r} b'(u(t-s, \omega(s))) ds \} \right] & = \tilde{E} \left[ \exp\{ 2\int_0^r b'(u(t-s, \omega(s))) ds \} Z^{-1} \right] \\
& \leq \left( \tilde{E} [ \exp\{ 4\int_0^r b'(u(t-s, \omega(s))) ds \}]\right)^{1/2} \\
&  \hspace{0.5cm} \times \left( \tilde{E}[Z^{-2}] \right)^{1/2} \\
\end{align*}

Denote by $L(r,y)$ the local time of the process  $(u(t-s, \omega(s)))_{s \in [0,r]}$. From the occupation time density formula and integration by parts:
\begin{align*}
\tilde{E} [ \exp\{ 4\int_0^t b'(u(t-s, \omega(s))) ds \}] & = \tilde{E} [ \exp\{ 4 \int_{\mathbb{R}} b'(y) L(r,y) dy \}] \\
& = \tilde{E} [ \exp\{ -4\int_{\mathbb{R}} b(y) \partial_y L(r,y) dy \}] .\\
& \leq \tilde{E} [ \exp\{ 4 \|b\|_{\infty} \int_{\mathbb{R}} |\partial_y L(r,y)|  dy \}] \\
\end{align*}

From Proposition \ref{integralMoments} we have 
\begin{align*}
\tilde{E} [ \exp\{ 4 \|b\|_{\infty} \int_{\mathbb{R}} |\partial_y L(r,y)|dy \}] & = \sum_{m \geq 0} \frac{(4 \|b\|_{\infty})^m}{m!} \tilde{E} \left[\left(\int_{\mathbb{R}} |\partial_y L(r,y)|dy \right)^m \right]  \\
& \leq \sum_{m \geq 0} \frac{(4 \|b\|_{\infty})^m  C^m \sqrt{(2m)!}}{ m! (\Gamma(\frac{m}{2}+1))^{1/6}}   \\
& =: \tilde{C}( \|b\|_{\infty}) \\
\end{align*}

which converges by Stirling's formula. 

It is easy to see that we can bound $\tilde{E}[Z^{-2}]$ by a function only depending on $\|b\|_{\infty}$. 

Combining the above we get
\begin{align*}
E[v^2(t,x)] & \leq C(\|b\|_{\infty}) \int_{C([0,T])}\int_{C([0,T])} \int_0^t \int_0^t  |h(t-r_1, \omega(r_1))| \\
& \hspace{1.5cm} \times |h(t-r_2,  \tilde{\omega}(r_2))| dr_1 dr_2 d\mu_x(\omega) d\mu_x( \tilde{\omega})  \\
& = C(\|b\|_{\infty}) \left( \int_{C([0,T])} \int_0^t  |h(t-r,  \omega(r))|  dr d\mu_x(\omega) \right)^2 \\
\end{align*}
for an appropriate function $C$, and the result follows.
\end{proof}

In the above we assumed that $h \in C^2([0,T] \times [0,1])$. We may extend this to $h \in L^2([0,T] \times [0,1])$.

\begin{cor}
For any $h \in L^2([0,T] \times [0,1])$ we have
$$
E[ (D^h u(t,x))^2]  \leq C(\|b\|_{\infty} ) \sqrt{t} \|h\|^2_{L^2([0,T] \times [0,1])}
$$

\end{cor}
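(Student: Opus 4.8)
The plan is to start from Lemma~\ref{DerivativeFreeBound}, which holds for $h \in C^2([0,T]\times[0,1])$ and in which the right-hand side is an integral of $|h|$ against the transition measure $\mu_x$. The first step is to recognize that integral as an integral against the heat kernel: since under $\mu_x$ the coordinate process has generator $\partial_x^2$, its time-$r$ marginal has density $G(r,x,\cdot)$, so that
$$
\int_{C([0,T])}\int_0^t |h(t-r,\omega(r))|\,dr\,d\mu_x(\omega) = \int_0^t\int_0^1 |h(t-r,y)|\,G(r,x,y)\,dy\,dr = \int_0^t\int_0^1 |h(s,y)|\,G(t-s,x,y)\,dy\,ds,
$$
where the last equality is the substitution $s=t-r$.

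Next I would apply the Cauchy-Schwarz inequality in $L^2([0,t]\times[0,1])$ to split this into $\|h\|_{L^2}$ times the $L^2$-norm of the kernel, and then invoke the upper bound in (\ref{SemigroupHolderCont}) with $t'=0$, namely $\int_0^t\int_0^1 G^2(t-s,x,y)\,dy\,ds \leq C\sqrt t$. This yields
$$
\int_{C([0,T])}\int_0^t |h(t-r,\omega(r))|\,dr\,d\mu_x(\omega) \leq C^{1/2} t^{1/4}\,\|h\|_{L^2([0,T]\times[0,1])}.
$$
Squaring and substituting into Lemma~\ref{DerivativeFreeBound}, and absorbing the extra constant into the function $C(\cdot)$, proves the asserted inequality for every $h \in C^2([0,T]\times[0,1])$, recalling that $v(t,x)=D^h u(t,x)$ for such $h$.

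The final step is to remove the smoothness assumption by density. Since $b \in C^1_c$, we know $u(t,x)\in\mathbb{D}^{1,2}$, so $D^h u(t,x)=\langle Du(t,x),h\rangle$ is defined and linear in $h\in L^2([0,T]\times[0,1])$, and $h\mapsto D^h u(t,x)$ is continuous from $L^2([0,T]\times[0,1])$ into $L^2(\Omega)$; indeed, by Cauchy-Schwarz,
$$
E[(D^{h}u(t,x)-D^{h'}u(t,x))^2] \leq E[\|Du(t,x)\|^2_{L^2([0,T]\times[0,1])}]\,\|h-h'\|^2_{L^2([0,T]\times[0,1])}.
$$
Given $h\in L^2$, I would choose $h_n\in C^2$ with $h_n\to h$ in $L^2$; then $D^{h_n}u(t,x)\to D^h u(t,x)$ in $L^2(\Omega)$, and the bound already established for each $h_n$ passes to the limit since $\|h_n\|_{L^2}\to\|h\|_{L^2}$.

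I expect the only point requiring care to be the identification of the $\mu_x$-average with the heat-kernel integral, i.e. confirming the normalization of the reflected Brownian motion so that its time-$r$ density is exactly $G(r,x,\cdot)$ rather than a time-rescaled version. Everything else is a routine application of Cauchy-Schwarz, the semigroup estimate (\ref{SemigroupHolderCont}), and the approximation argument.
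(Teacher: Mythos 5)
Your proposal is correct and follows essentially the same route as the paper: identify the $\mu_x$-average of $|h|$ as an integral against the heat kernel $G(r,x,\cdot)$, apply Cauchy--Schwarz (the paper's H\"{o}lder step) together with the upper bound in (\ref{SemigroupHolderCont}), and then extend from $C^2$ to $L^2$ directions by density. Your final step is in fact slightly more careful than the paper's terse ``extend by continuity,'' since you verify that the extension agrees with the actual directional derivative via $u(t,x)\in\mathbb{D}^{1,2}$; and your worry about the normalization of $\mu_x$ is settled by the paper's convention that $\mu_x$ is chosen so that the Feynman--Kac formula holds for the generator $\partial_x^2$, which makes the time-$r$ density exactly $G(r,x,\cdot)$.
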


\begin{proof}
We know that the random variable $\omega \mapsto \omega(r)$ has density $G(r,x,\cdot)$ under $\mu_x$.
From Lemma \ref{DerivativeFreeBound} we see that for $h \in C^2([0,T] \times [0,1])$, by H\"{o}lder's inequality
\begin{align*}
E[(D^h u(t,x))^2] & \leq C( \|b\|_{\infty}) \left( \int_0^t \int_0^1 |h(t-r,y)| G(r,x,y) dy dr \right)^2 \\
& \leq  C( \|b\|_{\infty})   \int_0^t \int_0^1 |h(r,y)|^2  dy dr  \int_0^t \int_{\mathbb{R}} G^2(r,x,y) dy dr \\
& \leq C( \|b\|_{\infty})  \|h\|^2_{L^2([0,T] \times [0,1])} C \sqrt{t} . \\
\end{align*}

Consequently we may extend the linear operator
\begin{align*}
L^2([0,T] \times [0,1]) & \rightarrow  L^2(\Omega) \\
h  & \mapsto  D^h u(t,x) \\
\end{align*}
by continuity. The result follows.

\end{proof}

\section{Directional Derivatives when the drift is discontinuous} \label{MainSection}

In \cite{GyongyPardoux} the authors successfully generalize the famous results by Zvonkin \cite{Zvonkin} and Veretennikov \cite{Veretennikov} to infinite dimension, i.e. they show that (\ref{SPDE}) has a unique strong solution when $b$ is bounded and measurable. In fact, they show that this holds true even when the drift is of linear growth.

Let us briefly explain the idea of the proof; let $b$ be bounded and measurable and define for $n \in \mathbb{N}$
$$
b_n(x) := n\int_{\mathbb{R}} \rho(n(x-y)) b(y) dy 
$$
where $\rho$ is a non-negative smooth function with compact support in $\mathbb{R}$ such that $\int_{\mathbb{R}} \rho(y)dy = 1$.

We let 
$$
\tilde{b}_{n,k} := \bigwedge_{j=n}^k b_j, \hspace{1cm} n \leq k
$$
and 
$$
B_n = \bigwedge_{j=n}^{\infty}  b_j,
$$
so that $\tilde{b}_{n,k}$ is Lipschitz. Denote by $\tilde{u}_{n,k}(t,x)$ the unique solution to (\ref{SPDE}) when we replace $b$ by $\tilde{b}_{n,k}$. Then one can use comparison to show that 
$$
\lim_{k \rightarrow \infty} u_{n,k}(t,x) = u_n(t,x),  \hspace{1cm} \textrm{in } L^2(\Omega)
$$ 
where $u_n(t,x)$ solves (\ref{SPDE}) when we replace $b$ by $B_n$. Furthermore, 
$$
\lim_{n \rightarrow \infty} u_{n}(t,x) = u(t,x),  \hspace{1cm} \textrm{in } L^2(\Omega)
$$ 
where $u(t,x)$ is a solution to (\ref{SPDE}). For details see \cite{GyongyPardoux}.


We are ready to prove our main theorem:

\begin{thm} \label{MainResult}
Assume $b$ is bounded and measurable. Denote by $u$ the solution of (\ref{SPDE}). Then for every $h \in L^2([0,T] \times [0,1])$ we have 
$$
u(t,x) \in \mathbb{D}^{h,2} .
$$
\end{thm}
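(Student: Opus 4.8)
The plan is to combine the derivative-free estimate of Section~\ref{DerivativeFreeSection}, which controls $E[(D^h u(t,x))^2]$ through $\|b\|_\infty$ \emph{only}, with the approximation scheme of \cite{GyongyPardoux} and the closedness criterion of Lemma~\ref{DerivativeCrit}. The crucial observation is that every drift appearing in that scheme --- the mollifications $b_j$, the Lipschitz truncations $\tilde b_{n,k} = \bigwedge_{j=n}^k b_j$, and the infima $B_n$ --- has sup-norm bounded by $\|b\|_\infty$, since a minimum of functions each lying in $[-\|b\|_\infty, \|b\|_\infty]$ stays in that interval. Hence, writing $C(\|b\|_\infty)\sqrt t\,\|h\|^2_{L^2}$ for the bound furnished by the Corollary at the end of Section~\ref{DerivativeFreeSection} (with $C$ increasing), the \emph{same} constant controls $E[(D^h\,\cdot\,)^2]$ for every approximating solution simultaneously.

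First I would upgrade that estimate from the $C^1$ drifts covered by Section~\ref{DerivativeFreeSection} to the merely Lipschitz drifts $\tilde b_{n,k}$. Fixing $n,k$, I mollify $\tilde b_{n,k}$ to obtain smooth $g_\varepsilon$ with $\|g_\varepsilon\|_\infty \le \|b\|_\infty$ and $g_\varepsilon \to \tilde b_{n,k}$ uniformly; the corresponding solutions $u^{g_\varepsilon}(t,x)$ converge to $u_{n,k}(t,x)$ in $L^2(\Omega)$ by the standard Gronwall stability of the mild formulation under Lipschitz drifts, while each $u^{g_\varepsilon}(t,x) \in \mathbb{D}^{h,2}$ satisfies $E[(D^h u^{g_\varepsilon}(t,x))^2] \le C(\|b\|_\infty)\sqrt t\,\|h\|^2_{L^2}$. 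Lemma~\ref{DerivativeCrit} then yields $u_{n,k}(t,x) \in \mathbb{D}^{h,2}$ with the same bound.

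Next I would pass to the limit twice using the same lemma. Since $u_{n,k}(t,x) \to u_n(t,x)$ in $L^2(\Omega)$ as $k \to \infty$ (from \cite{GyongyPardoux}) and $\sup_k E[(D^h u_{n,k}(t,x))^2] \le C(\|b\|_\infty)\sqrt t\,\|h\|^2_{L^2} < \infty$, Lemma~\ref{DerivativeCrit} gives $u_n(t,x) \in \mathbb{D}^{h,2}$ with the same uniform bound. Then, since $u_n(t,x) \to u(t,x)$ in $L^2(\Omega)$ as $n \to \infty$ and again $\sup_n E[(D^h u_n(t,x))^2] < \infty$, a final application of Lemma~\ref{DerivativeCrit} gives $u(t,x) \in \mathbb{D}^{h,2}$, which is the assertion.

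The step I expect to be the main obstacle is reconciling the two requirements of Lemma~\ref{DerivativeCrit} at each layer: the $L^2(\Omega)$ convergence of the solutions (which forces the minima/infima construction of \cite{GyongyPardoux}, since a naive mollification of $b$ need not produce strongly convergent solutions) must be achieved along \emph{the same} approximations for which the derivative-free estimate holds with a $\|b\|_\infty$-only constant. It is precisely the uniformity of that constant across all three layers --- rather than a constant depending on Lipschitz norms, which would blow up along the scheme --- that makes the iterated use of Lemma~\ref{DerivativeCrit} legitimate. A secondary technical point is the initial smoothing of the Lipschitz drifts $\tilde b_{n,k}$, needed only because Section~\ref{DerivativeFreeSection} is phrased for $C^1$ drifts.
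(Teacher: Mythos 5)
Your proposal is correct and takes essentially the same route as the paper: the Gy\"{o}ngy--Pardoux approximation scheme, the derivative-free bound of Section~\ref{DerivativeFreeSection} (uniform over all approximating drifts because each has sup-norm at most $\|b\|_{\infty}$), and the closedness criterion of Lemma~\ref{DerivativeCrit}. If anything, your treatment is more careful than the paper's own proof, which invokes Lemma~\ref{DerivativeFreeBound} directly for the solutions $u_n$ even though their drifts $B_n$ are not $C^1$; your extra mollification layer for the Lipschitz drifts $\tilde{b}_{n,k}$ and the iterated application of Lemma~\ref{DerivativeCrit} (whose weak limits preserve the uniform bound) fill in exactly the details the paper compresses.
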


\begin{proof}
From the discussion above we know that we have $u_n(t,x) \rightarrow u(t,x)$ in $L^2(\Omega)$. From Lemma \ref{DerivativeFreeBound} we see that 
$$
\sup_{n \geq 1} E[ (D^h u_n(t,x))^2] < \infty
$$
for any $h \in L^2([0,T] \times [0,1])$. It follows from Lemma \ref{DerivativeCrit} that $u(t,x) \in \mathbb{D}^{h,2}$.
\end{proof}

\section{Appendix} \label{appendix}
Consider the matrices from Section \ref{LocalTime},
$$
\Sigma = 
\left( \begin{array}{cccc}
(s_1 - s_2)^{-1/2} & 0 & \dots & 0 \\
0 & (s_2 - s_3)^{-1/2} & \dots & 0 \\
\vdots & \vdots  & \ddots &  \vdots\\ 
0 & \dots & (s_{m-1} - s_m)^{-1/2} & 0 \\
0 & \dots & 0 & s_m^{-1/2}  \\
\end{array}
\right)
$$
and 
$$
M = 
\left( \begin{array}{rrrrr}
1 & -1 & \dots & 0 & 0 \\
0 & 1 & \dots & 0 & 0 \\
\vdots & \vdots  & \ddots & \vdots &  \vdots\\ 
0 & \dots & \dots & 1 & -1 \\
0 & \dots & \dots & 0 & 1  \\
\end{array}
\right) .
$$

The purpose of this section in to show that the function $f_m(s_1, \dots s_m) := \textrm{per}( M \Sigma M^T)$ is such that for $\beta \in (0,1)$ we have
$$
\int_{0 < s_m < \dots s_1 < t} |f_m(s_1, \dots, s_m)|^{\beta} ds_m \dots ds_1 \leq C^m
$$
for some constant $C = C(t,\beta)$.

We start by noting that 
$$
M\Sigma M^T  = 
\left( \begin{array}{rrrrrr}
a_1 & b_1 & 0 & \dots & \dots & 0 \\
b_1 & a_2 & b_2&  \dots & \dots & 0 \\
0 & b_2 & a_3 &  \dots & \dots & 0 \\
\vdots & \vdots &   & \ddots  & \dots &  \vdots\\ 
0 & \dots & \dots& 0 & a_{m-1} & b_{m-1} \\
0 & \dots & \dots& 0 & b_{m-1} & a_m  \\
\end{array} 
\right) 
$$

where 
$$
a_j = 
\left\{ 
\begin{array}{ll}
(s_j - s_{j+1})^{-1/2} +(s_{j+1} - s_{j+2})^{-1/2} &  \textrm{ for } j=1, \dots m-2 \\
(s_{m-1} - s_m)^{-1/2} + s_m^{-1/2}  &  \textrm{ for } j= m-1\\
s_m^{-1/2} &  \textrm{ for } j= m \\
\end{array} \right.
$$

and $b_j = -(s_{j+1} - s_{j+2})^{-1/2}$ for $j = 1, \dots, m-2$.

Using the definition of the permanent of a matrix we see that we have the following recursive relation

\begin{align*}
f_m(s_1, \dots , s_m)   =  & \left( (s_1 - s_2)^{-1/2} +(s_2 - s_3)^{-1/2} \right) f_{m-1}( s_2, \dots ,s_m) \\
& + (s_2- s_3)^{-1} f_{m-2}(s_3, \dots ,s_m) \\
\end{align*}
with
$$
f_1(s_1) = s_1^{-1/2} \, \, \textrm{ and  } \, \, f_2(s_1, s_2) = (s_2 - s_1)^{-1/2}s_2^{-1/2} + s_2^{-1} .
$$

We write $f_m(s_1,\dots ,s_m) = p_m((s_1 - s_2)^{-1/2}, \dots ,(s_{m-1} - s_m)^{-1/2}, s_m^{-1/2})$ where $p_m$ is the polynomial recursively defined by
$$
p_m(x_1, \dots, x_m)   =   ( x_1 + x_2 ) p_{m-1}( x_2, \dots , x_m)  + x_2^2 p_{m-2}(x_3 \dots ,x_m) 
$$
with
$$
p_1(x_1) = x_1 \textrm{ and  } \, \, p_2(x_1, x_2) = x_1x_2 + x_2^2 .
$$

If we denote by $deg_{x_i}p_m$ the degree of the polynomial in the variable $x_i$, for $i=1, \dots m$ we see from the recursive relation that 
$$
deg_{x_1}p_m = 1 \, \textrm{ and } \, deg_{x_j} p_m \leq 2, \textrm{ for } j=2, \dots , m .
$$


Moreover, if we denote by $\gamma_m$ the number of terms in this polynomial, it is clear from the recursive relation that 
$$
\gamma_m = 2 \gamma_{m-1} + \gamma_{m-2} 
$$
and 
$$
\gamma_1 = 1 \, \, \textrm{ and } \, \, \gamma_2 = 2 .
$$
So that we have $\gamma_m \leq C^m$ for $C$ large enough.

It follows that we may write 
$$
p_m(x_1, \dots, x_m) = \sum_{\alpha} c_{\alpha} x^{\alpha}
$$
where the sum is taken over all multiindices $\alpha \in \mathbb{N}^m$ with $\alpha_i \leq 2$ and $\alpha_1 \leq 1$. Here we have denoted $x^{\alpha} = x_1^{\alpha_1} \dots x_m^{\alpha_m}$. Moreover, there are at most $C^m$ terms in this sum with $C$ as above and one can show that $|c_{\alpha}| \leq 3^m$ for all $\alpha$.

Consequently
$$
|f(s_1, \dots , s_m)|^{\beta} \leq 3^m \sum_{\alpha} |s_1 - s_2|^{-\beta \alpha_1/2} \cdots |s_{m-1} - s_m|^{-\beta \alpha_{m-1}/2} s_m^{-\beta \alpha_m/2} .
$$
Since $\frac{\beta \alpha_i}{2} < 1$ for all $i = 1, \dots , m$, each of the above terms are integrable over $ 0 < s_m < \dots < s_1 < t$, and there are at most $C^m$ such terms. The result follows.

\newpage


\begin{thebibliography}{99}


\bibitem{DaPratoFlandoliPriolaRockner} Da Prato, G., Flandoli, F., Priola, E., R\"{o}ckner, M.: Strong uniqueness for stochastic evolution equations in Hilbert spaces with unbounded measurable drift.  Ann. Probab. Volume 41, Number 5 (2013), 3306-3344.

\bibitem{Fernique} Fernique, X.: Regularite des trajectoires des fonctions aleatoires gaussiennes.  Ecole d'Ete de Probabilites de Saint-Flour IV-1974, Lecture Notes in Mathematics Volume 480, 1975, pp 1-96.

\bibitem{FlandoliNilssenProske} Flandoli, F., Nilssen, T., Proske, F.,:  Malliavin differentiability and strong solutions for a class of SDE in Hilbert spaces. University of Oslo series: https://www.duo.uio.no/handle/10852/38087 

\bibitem{GemanHorowitz} Geman, D., Horowitz, J.: Occupation Densities. Ann. Probab., Volume 8, No. 1, 1-67 (1980).

\bibitem{GyongyPardoux} Gy\"{o}ngy, I., Pardoux, E.: On quasi-linear stochastic partial differential equations. Probab. Theory Relat. Fields 94, 413-425 (1993).


\bibitem{LiWei} Li, W., Wei, A.: A Gaussian inequality for expected absolute products. Journal of Theoretical Probability, Volume 25, Issue 1, pp 92-99 (2012).

\bibitem{PMNPZ} Menoukeu-Pamen, O., Meyer-Brandis, T., Nilssen, T., Proske,
F., Zhang, T.: A variational approach to the construction and Malliavin
differentiability of strong solutions of SDE's. Mathematische Annalen Volume 357, Issue 2, pp 761-799 (2013).

\bibitem{Nualart} Nualart, D.: The Malliavin Calculus and Related Topics.
Springer (1995).

\bibitem{Veretennikov} Veretennikov, A.Y.: On the strong solutions of
stochastic differential equations. Theory Probab. Appl., 24, 354-366 (1979).

\bibitem{Zvonkin} Zvonkin, A.K.: A transformation of the state space of a
diffusion process that removes the drift. Math.USSR (Sbornik), 22, 129-149
(1974).

\end{thebibliography}
\end{document}